\begin{document}

\newtheorem{theorem}{Theorem}[section]
\newtheorem{corollary}[theorem]{Corollary}
\newtheorem{definition}[theorem]{Definition}
\newtheorem{conjecture}[theorem]{Conjecture}
\newtheorem{question}[theorem]{Question}
\newtheorem{lemma}[theorem]{Lemma}
\newtheorem{proposition}[theorem]{Proposition}
\newtheorem{claim}[theorem]{Claim}
\newtheorem{example}[theorem]{Example}
\newtheorem{problem}[theorem]{Problem}

\title{ Maximum atom-bond connectivity index with given graph parameters \thanks{ This work is supported by the National Natural Science Foundation of China (Nos.11531001 and 11271256),  the Joint NSFC-ISF Research Program (jointly funded by the National Natural Science Foundation of China and the Israel Science Foundation(No. 11561141001)),  Innovation Program of Shanghai Municipal Education Commission (No. 14ZZ016, No. 15ZZ108), and Simons Foundation (No. 245307).\newline \indent
 $^{\dagger}$Corresponding author:  Xiao-Dong Zhang (Email: xiaodong@sjtu.edu.cn)}}
\author{Xiu-Mei Zhang$^{1}$, Yu Yang$^2$,  Hua  Wang$^{3}$ and  Xiao-Dong Zhang$^{4\dagger}$\\
{\small $^{1}$Department of Mathematics, Shanghai Sanda University}\\
{\small  2727 Jinhai road, Shanghai, 201209, P. R. China}\\
{\small $^{2}$School of Information Science and Technology, Dalian Maritime University}\\
{\small Dalian, 116026, P. R. China}\\
{\small $^{3}$Department of Mathematical Sciences, Georgia Southern
University} \\
{\small Stateboro, GA 30460,  United States}\\
{\small $^{4\dagger}$ Department of Mathematics,
 Shanghai Jiao Tong University} \\
{\small  800 Dongchuan road, Shanghai, 200240, P. R. China}
}

\date{}
\maketitle
 \begin{abstract}
  The atom-bond connectivity (ABC) index is a degree-based topological index. It was introduced due to its applications in modeling the properties of certain molecular structures and has been since extensively studied. In this note, we examine the influence on the extremal values of the ABC index by various graph parameters. More specifically, we consider the maximum ABC index of connected graphs of given order, with fixed independence number, number of pendent vertices, chromatic number and edge-connectivity respectively. We provide characterizations of extremal structures as well as some conjectures. Numerical analysis of the extremal values are also presented.

 \end{abstract}

{{\bf Key words:} Atom-bond connectivity index; independence number; pendent vertices; chromatic number; edge-connectivity
}\\

{{\bf AMS Classifications:} 05C05, 05C30} \vskip 0.5cm

\section{Introduction and preliminaries}

Let $G$ be a simple graph with vertex set $V(G)$ and edge set $E(G)$. For $u \in V(G)$, the degree of $u$, denoted by $d(u)$, is the
number of neighbors of $u$ in $G$. An independent set is a set of vertices of which no pair is adjacent.
The independence number $\beta(G)$ of a graph $G$ is the size of a largest independent set of $G$. The chromatic number $\chi(G)$ of a graph $G$ is the least number of colors assigned to $V(G)$ such that no adjacent elements receive the same color.
The edge connectivity $k(G)$ of a graph $G$ is the minimum number of edges needed to disconnect $G$.

The atom bond connectivity (ABC) index of $G$ is defined \cite{estrada1998} as
$$ABC(G) = \sum_{uv\in E(G)}\sqrt{\frac{d(u)+d(v)-2}{d(u)d(v)}}.$$
The ABC index is one of many so called topological indices that are extensively used in theoretical chemistry to correlate
physico-chemical properties with the molecular structures of chemical compounds.
It appears that the ABC index shows a strong correlation with heat of formation of alkanes \cite{estrada1998}.  Some topological approaches were also developed basing on the ABC index to explain the differences in the energy of linear and branched alkanes \cite{estrada2008}.

In the study of topological indices in general, it is often of interest to consider the extremal values of a certain index among graphs under various constrains. Along this line, the extremal values of the ABC index have been extensively explored \cite{chen2012,furtula2009,gan2012,xing2011,dimitrov2014,xing2010,xing2012,das2012,zhou2011,mGoubko2015,cmagnant2015,ddimitrov201601,ddimitrov201602}.

We intend to expand this study by exploring the maximum ABC index of connected graphs of given order, with fixed independence number, number of pendent vertices, edge-connectivity, and chromatic number respectively. First we will introduce some simple but useful facts.

\begin{theorem}[\cite{chen2011}]\label{theorem2-2}
Let $G$ be a graph with n vertices, if $x,y\in V(G)$ and $xy\in E(\overline{G})$, then
$$ ABC(G)\leqslant ABC(G+xy) $$
with equality if and only if $x$ and $y$ are both isolated vertices. Furthermore,
$$ ABC(G)\leqslant ABC(K_n) $$
with equality if and only if $G = K_n$.
\end{theorem}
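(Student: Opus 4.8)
\medskip
\noindent\emph{Proof proposal.} The plan is to first establish the edge-addition inequality by a local computation, and then deduce $ABC(G)\le ABC(K_n)$ by adding non-edges one at a time. Put $p=d_G(x)$ and $q=d_G(y)$. Since $xy\notin E(G)$, passing from $G$ to $G'=G+xy$ changes only the weights of the edges incident to $x$ or to $y$, so I would write
\[
ABC(G')-ABC(G)=\Delta_x+\Delta_y+w_{xy},
\]
where $w_{xy}=\sqrt{\tfrac{p+q}{(p+1)(q+1)}}$ is the weight of the new edge $xy$ in $G'$, and
\[
\Delta_x=\sum_{u\in N_G(x)}\frac{1}{\sqrt{d(u)}}\left(\sqrt{\tfrac{p+d(u)-1}{p+1}}-\sqrt{\tfrac{p+d(u)-2}{p}}\right)
\]
records the total change in the weights of the edges at $x$ other than $xy$, with $\Delta_y$ defined symmetrically. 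If $x$ and $y$ are both isolated then $p=q=0$, the sums are empty, $w_{xy}=0$, and equality holds; it remains to show $ABC(G')-ABC(G)>0$ whenever $p\ge1$ or $q\ge1$.

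The key step is a per-edge estimate. Rationalizing the bracket for a neighbour $u$ of $x$ with $d(u)=a$ shows it equals $\tfrac{2-a}{p(p+1)}$ divided by $\sqrt{\tfrac{p+a-1}{p+1}}+\sqrt{\tfrac{p+a-2}{p}}$, so it is nonnegative for $a\le2$ and negative for $a\ge3$. In the latter case I would bound that denominator below by $\tfrac{2\sqrt{a-1}}{\sqrt{p+1}}$ (using $p\ge1$), which forces $u$'s contribution to $\Delta_x$ to have modulus at most $\tfrac{a-2}{2p\sqrt{p+1}\,\sqrt{a(a-1)}}<\tfrac{1}{2p\sqrt{p+1}}$, the last step because $(a-2)^2<a(a-1)$ for $a\ge3$. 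Summing over the at most $p$ neighbours of $x$ then gives $\Delta_x>-\tfrac{1}{2\sqrt{p+1}}$ when $p\ge1$, and $\Delta_x=0$ when $p=0$; symmetrically for $\Delta_y$.

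It remains to combine these. If $p=q=0$ we are in the equality case. If exactly one vanishes, say $q=0$ and $p\ge1$, then $w_{xy}=\sqrt{\tfrac{p}{p+1}}\ge\tfrac{1}{\sqrt{p+1}}>\tfrac{1}{2\sqrt{p+1}}>-\Delta_x$, so $ABC(G')-ABC(G)>0$. If $p,q\ge1$, it suffices to check $\sqrt{\tfrac{p+q}{(p+1)(q+1)}}\ge\tfrac{1}{2\sqrt{p+1}}+\tfrac{1}{2\sqrt{q+1}}$, which after squaring reduces to $3(p+q)-2\sqrt{(p+1)(q+1)}\ge2$, a consequence of $2\sqrt{(p+1)(q+1)}\le(p+1)+(q+1)$ together with $p+q\ge2$; combined with the strict bounds on $\Delta_x,\Delta_y$ this again gives $ABC(G')-ABC(G)>0$. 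This proves the first assertion, equality holding exactly when $x,y$ are both isolated. For the ``furthermore'' part, adding non-edges one at a time shows $ABC(G)\le ABC(K_n)$; and if $G\ne K_n$ (with $n\ge3$), then either $G=\overline{K_n}$, whence $ABC(G)=0<ABC(K_n)$, or $G$ has a non-edge with a non-isolated endpoint (otherwise every non-isolated vertex would be adjacent to all others, forcing $G\in\{K_n,\overline{K_n}\}$), and adding that non-edge strictly increases $ABC$; in either case $ABC(G)<ABC(K_n)$.

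I expect the main obstacle to be the per-edge estimate of the second paragraph: crude bounds on $|\Delta_x|$ grow with $\max_u d(u)$ and are useless, so one must exploit the factor $1/\sqrt{d(u)}$ in the ABC weight (this is what produces the saving $(a-2)/\sqrt{a(a-1)}<1$) together with a sufficiently good lower bound on the rationalized denominator to make the clean bound $\Delta_x>-\tfrac{1}{2\sqrt{p+1}}$ hold uniformly over all neighbour degrees; the remaining steps are bookkeeping and elementary inequalities.
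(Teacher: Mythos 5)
Your proposal addresses a statement that this paper does not actually prove: Theorem~\ref{theorem2-2} is imported verbatim from Chen and Guo \cite{chen2011} and is used here as a black box, so there is no in-paper argument to compare against. Judged on its own, your proof is correct and self-contained. The decomposition $ABC(G+xy)-ABC(G)=\Delta_x+\Delta_y+w_{xy}$ is the natural one (only edges at $x$ and $y$ change weight, since no other vertex changes degree), and the quantitative heart of your argument checks out: rationalizing gives the numerator $\tfrac{2-a}{p(p+1)}$, the lower bound $\sqrt{\tfrac{p+a-1}{p+1}}+\sqrt{\tfrac{p+a-2}{p}}\ge \tfrac{2\sqrt{a-1}}{\sqrt{p+1}}$ reduces to $p^2+a-2\ge 0$, and $(a-2)^2<a(a-1)$ for $a\ge 3$ yields the clean uniform bound $\Delta_x>-\tfrac{1}{2\sqrt{p+1}}$ for $p\ge1$. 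The compensation inequality $\sqrt{\tfrac{p+q}{(p+1)(q+1)}}\ge\tfrac{1}{2\sqrt{p+1}}+\tfrac{1}{2\sqrt{q+1}}$ does reduce, after clearing denominators, to $3(p+q)-2\ge 2\sqrt{(p+1)(q+1)}$, which follows from AM--GM and $p+q\ge2$; the mixed case $q=0$, $p\ge1$ and the equality analysis (both endpoints isolated gives a new edge of weight $0$ and empty sums) are also handled correctly, as is the derivation of strictness in the ``furthermore'' part via the observation that any $G\notin\{K_n,\overline{K_n}\}$ has a non-edge with a non-isolated endpoint. Two small remarks: your parenthetical restriction to $n\ge3$ in the last step is actually necessary, since for $n=2$ one has $ABC(\overline{K_2})=0=ABC(K_2)$, so the stated equality condition for $ABC(G)\le ABC(K_n)$ silently assumes $n\ge3$; and your per-edge estimate is doing from scratch the work that the paper's Lemmas~\ref{theorem2-3} and \ref{theorem2-4} package qualitatively (sign and monotonicity of $f(x+1,y)-f(x,y)$), but those lemmas alone do not give the quantitative bound needed to beat the negative contributions, so your explicit $-\tfrac{1}{2\sqrt{p+1}}$ bound is a genuine and necessary addition rather than a restatement.
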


To simplify notations, we define the following functions:
\begin{itemize}
\item $f(x,y)= \sqrt{\frac{x+y-2}{xy}}$;
\item $g(x,y)=f(x+1,y)-f(x,y)$;
\item $F(x)=xf(x+m,1)$,
\end{itemize}
for $x,y,m\geq 1$.
\begin{lemma}[\cite{xing2011}]\label{theorem2-3}
For the function $f(x,y)$ we have:
\begin{itemize}
\item $f(x,1)$ is strictly increasing with respect to $x$;
\item $f(x,2)=\frac{\sqrt{2}}{2}$;
\item $f(x,y)$ is strictly decreasing with respect to $x$ for any fixed $y\geq 3$.
\end{itemize}

\end{lemma}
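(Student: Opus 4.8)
The plan is to reduce all three statements to the behaviour of one elementary expression, so that no calculus is needed. The key step is the algebraic rewriting of the quantity under the radical: for $x,y\ge 1$,
$$\frac{x+y-2}{xy}=\frac{1}{y}+\frac{y-2}{xy}=\frac{1}{y}\left(1+\frac{y-2}{x}\right),$$
and hence $f(x,y)=\dfrac{1}{\sqrt{y}}\sqrt{1+\dfrac{y-2}{x}}$. Since $t\mapsto\sqrt t$ is strictly increasing on $[0,\infty)$, for a fixed $y$ the monotonic behaviour of $f(x,y)$ in $x$ is governed entirely by the term $\frac{y-2}{x}$, which is strictly decreasing, constant, or strictly increasing in $x$ according as $y-2$ is positive, zero, or negative. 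This single observation is the whole engine of the proof; the threshold $y=2$ appears transparently.

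With this in hand the three items follow immediately. Taking $y=1$ gives $f(x,1)=\sqrt{1-\frac1x}$, which is strictly increasing in $x$ because $\frac1x$ is strictly decreasing on $[1,\infty)$. Taking $y=2$ annihilates the second term, leaving the constant value $\frac{1}{\sqrt2}=\frac{\sqrt2}{2}$ independent of $x$. Finally, for fixed $y\ge 3$ one has $y-2\ge 1>0$, so $1+\frac{y-2}{x}$ is strictly decreasing in $x$, and therefore so is $f(x,y)$.

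I do not anticipate any genuine obstacle. The only matters requiring care are bookkeeping: one should note that all manipulations are valid on the intended domain $x\ge1$ (every radicand is nonnegative and no denominator vanishes) and that composition with the strictly increasing map $t\mapsto\sqrt t$ preserves strict monotonicity. A purely calculus-based alternative — computing $\partial f/\partial x$ and inspecting the sign of its numerator — would also work, but it obscures the role of the sign of $y-2$ that the factorization above makes explicit, so I would present the argument in the factored form.
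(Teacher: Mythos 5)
Your argument is correct, and it is worth noting that the paper itself offers no proof of this lemma at all: it is imported as a known fact from the cited reference, where the standard verification proceeds by computing $\partial f/\partial x$ and checking the sign of the resulting expression. Your factorization $f(x,y)=\frac{1}{\sqrt{y}}\sqrt{1+\frac{y-2}{x}}$ is a clean, calculus-free alternative that makes the threshold at $y=2$ visible at a glance: the algebraic identity $\frac{x+y-2}{xy}=\frac{1}{y}\bigl(1+\frac{y-2}{x}\bigr)$ checks out, all radicands are nonnegative on the domain $x,y\geq 1$, and composing the strictly monotone map $x\mapsto \frac{y-2}{x}$ with the strictly increasing square root preserves strict monotonicity, so all three items follow exactly as you say. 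The only microscopic point of bookkeeping is that for $y=1$ and $x=1$ the value is $f(1,1)=0$, which is consistent with strict increase on $x\geq 1$; nothing breaks. What your approach buys over the derivative computation is transparency about why $y=2$ is the pivot; what the calculus route buys is uniformity with the proofs of the neighboring Lemmas (such as the convexity argument for $F$), which genuinely require differentiation. Either is acceptable here.
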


\begin{lemma}[\cite{dimitrov2014,xing2011}]\label{theorem2-4}
The function $g(x,y)$ is strictly decreasing with respect to $x$ if $y=1$, and increasing with respect to $x$ if $y\geq 2$.
\end{lemma}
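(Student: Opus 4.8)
The plan is to treat $x$ as a real variable and prove the monotonicity of $g(\cdot,y)$ on $[1,\infty)$; monotonicity over the integers then follows a fortiori. The computation rests on the identity $f(x,y)^2 = \dfrac{x+y-2}{xy} = \dfrac1y + \dfrac{y-2}{xy}$. Differentiating in $x$ gives $2f(x,y)\,\partial_x f(x,y) = \dfrac{2-y}{x^2 y}$, hence $\partial_x f(x,y) = \dfrac{2-y}{2x^2 y\, f(x,y)}$ wherever $f(x,y)>0$, and therefore
$$\partial_x g(x,y) \;=\; \partial_x f(x+1,y) - \partial_x f(x,y) \;=\; \frac{2-y}{2y}\left(\frac{1}{(x+1)^2 f(x+1,y)} - \frac{1}{x^2 f(x,y)}\right).$$

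The key step is to show that $\phi(x) := x^2 f(x,y)$ is strictly increasing on $[1,\infty)$. This is transparent from $\phi(x)^2 = x^3(x+y-2)/y$: when $y=1$ the derivative of $x^3(x-1)$ is $x^2(4x-3)>0$ on $[1,\infty)$, and when $y\geq 2$ the quantity $x^3(x+y-2)$ is a product of positive increasing functions. Since $\phi\geq 0$, strict monotonicity of $\phi^2$ forces strict monotonicity of $\phi$, so $x^2 f(x,y) < (x+1)^2 f(x+1,y)$; consequently the parenthesized difference above is negative for every $x$ with $x^2 f(x,y)>0$, i.e.\ for all $x>1$ when $y=1$ and for all $x\geq 1$ when $y\geq 2$.

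Reading off the sign then finishes the proof. For $y=1$ the prefactor $\frac{2-y}{2y}=\frac12$ is positive, so $\partial_x g(x,1)<0$ on $(1,\infty)$; as $g(\cdot,1)$ is continuous on $[1,\infty)$ (with $g(1,1)=f(2,1)-f(1,1)=\tfrac{\sqrt2}{2}$), it is strictly decreasing there. For $y=2$, Lemma \ref{theorem2-3} gives $f(x,2)=f(x+1,2)=\tfrac{\sqrt2}{2}$, so $g(x,2)\equiv 0$, which matches $\frac{2-y}{2y}=0$; this is (weakly) increasing. For $y\geq 3$ the prefactor is negative and multiplies a negative quantity, so $\partial_x g(x,y)>0$ and $g(\cdot,y)$ is strictly increasing.

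I do not anticipate a real obstacle here; the only mildly delicate point is the corner $x=1,\,y=1$, where $f(1,1)=0$ makes the derivative formula singular and one must pass from the open interval to the closed one by continuity. If one prefers to avoid calculus, rationalization does everything directly: for $y=1$ one gets $g(x,1)=\big[(x+\sqrt{x^2-1})\sqrt{x^2+x}\big]^{-1}$, plainly decreasing, and for $y\geq 3$, writing $g(x,y)=\tfrac{1}{\sqrt y}\big(\sqrt{1+\tfrac{y-2}{x+1}}-\sqrt{1+\tfrac{y-2}{x}}\big)$ and clearing the radical reduces the claim to the increasingness of $(x+1)\sqrt{x^2+(y-2)x}+x\sqrt{x^2+yx+y-1}$, a sum of increasing functions.
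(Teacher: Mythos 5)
Your proof is correct. Note that the paper itself gives no argument for this lemma --- it is imported as a citation from the references \cite{dimitrov2014,xing2011} --- so any self-contained verification is already doing more than the text does. Your route is clean: reducing everything to the monotonicity of $\phi(x)=x^2f(x,y)$ via $\phi(x)^2=x^3(x+y-2)/y$ is a nice way to avoid messy sign analysis of second differences, and the resulting sign of $\partial_x g(x,y)=\frac{2-y}{2y}\bigl(\phi(x+1)^{-1}-\phi(x)^{-1}\bigr)$ falls out immediately. You also handle the two genuinely delicate points correctly: the case $y=2$, where $g(x,2)\equiv 0$ is only weakly increasing (consistent with the lemma's wording, which drops ``strictly'' for $y\geq 2$), and the corner $x=1$, $y=1$, where $f(1,1)=0$ makes the derivative formula singular and you correctly pass from strict decrease on $(1,\infty)$ to strict decrease on $[1,\infty)$ by continuity. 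The closing rationalization argument is a valid calculus-free alternative; either version would serve as a complete proof. No gaps.
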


\begin{lemma}\label{theorem2-5}
The function $F(x)$ is convex and strictly increasing for $x\geq 1$. As a result of the convexity we have
$$F(x_1+1)-F(x_1)>F(x_2)-F(x_2-1) $$
if $x_1\geq x_2\geq 1$.
\end{lemma}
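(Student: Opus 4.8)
The plan is to compute the derivatives of $F(x) = x\,f(x+m,1) = x\sqrt{\frac{x+m-1}{x+m}} = x\sqrt{1 - \frac{1}{x+m}}$ directly and show $F'(x) > 0$ and $F''(x) > 0$ for $x \geq 1$ (and indeed for all $x > 0$, since $m \geq 1$). First I would write $F(x) = x\,(x+m)^{-1/2}(x+m-1)^{1/2}$ and differentiate; it is cleanest to set $t = x+m \geq 1+m \geq 2$ and express things in $t$, or alternatively to work with $h(x) = F(x)^2 = x^2\bigl(1 - \tfrac{1}{x+m}\bigr) = x^2 - \frac{x^2}{x+m}$, which is a rational function and easy to handle. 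From $h = F^2$ one gets $F' = h'/(2F)$ and $F'' = \frac{h''}{2F} - \frac{(h')^2}{4F^3} = \frac{2h h'' - (h')^2}{4F^3}$, so it suffices to check $h' > 0$ and $2h h'' - (h')^2 > 0$ on the relevant range; since $F > 0$ there, the signs transfer. Computing $h(x) = x^2 - x + \frac{mx}{x+m} - \frac{m^2}{x+m}$... more simply, $\frac{x^2}{x+m} = x - m + \frac{m^2}{x+m}$, so $h(x) = x^2 - x + m - \frac{m^2}{x+m}$, whence $h'(x) = 2x - 1 + \frac{m^2}{(x+m)^2}$ and $h''(x) = 2 - \frac{2m^2}{(x+m)^3}$.

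From these formulas monotonicity is immediate: for $x \geq 1$ and $m \geq 1$ we have $h'(x) = 2x - 1 + \frac{m^2}{(x+m)^2} \geq 2 - 1 + (\text{positive}) > 0$, so $F' = h'/(2F) > 0$. For convexity, note $h''(x) = 2 - \frac{2m^2}{(x+m)^3} > 0$ because $(x+m)^3 > m^2$ whenever $x + m > 1$, which holds since $x \geq 1$; so $h$ itself is convex, but that is not quite enough — I need $2h h'' - (h')^2 > 0$. The main obstacle is exactly this inequality: it requires showing $2h(x)h''(x) > h'(x)^2$ for all $x \geq 1$, $m \geq 1$, which after clearing the denominator $(x+m)^3$ becomes a polynomial inequality in $x$ and $m$. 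I would substitute the explicit expressions, multiply through by $(x+m)^3 > 0$, and reduce to showing that a certain polynomial $P(x,m)$ is positive on $x \geq 1$, $m \geq 1$; I expect this to come down to checking that all coefficients (after expanding in powers of $x$, say, or after the shift $x = 1 + s$ with $s \geq 0$) are nonnegative, with at least one strictly positive. An alternative that avoids the squaring trick: differentiate $F$ directly once to get a closed form for $F'(x)$, observe $F'(x) > 0$ by inspection, then differentiate again and factor $F''(x)$ as (positive factor) times a simpler expression whose sign is transparent — for instance one often finds $F''(x) = \frac{c\,(\text{something manifestly positive})}{(x+m)^{5/2}(x+m-1)^{3/2}}$ type form. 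Either route reduces the analytic claim to a finite, if slightly tedious, algebraic verification.

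Finally, the displayed consequence
$$F(x_1+1) - F(x_1) > F(x_2) - F(x_2-1) \quad \text{for } x_1 \geq x_2 \geq 1$$
follows from convexity in the standard way: by the mean value theorem $F(x_1+1) - F(x_1) = F'(\xi_1)$ for some $\xi_1 \in (x_1, x_1+1)$ and $F(x_2) - F(x_2-1) = F'(\xi_2)$ for some $\xi_2 \in (x_2-1, x_2)$; since $\xi_1 > x_1 \geq x_2 > \xi_2$ and $F'$ is strictly increasing (as $F$ is strictly convex), we get $F'(\xi_1) > F'(\xi_2)$, which is the claimed strict inequality. One should note this argument only needs $F$ defined and twice differentiable on $[x_2 - 1, x_1 + 1] \subseteq [0, \infty)$, which is fine since $m \geq 1$ keeps $x + m - 1 \geq 0$ throughout. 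I would present the derivative computation and the two sign checks as the body of the proof, then close with this one-line convexity deduction.
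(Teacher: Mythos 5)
Your proposal is correct in substance but takes a different computational route from the paper, and it leaves its one genuinely delicate step as an expectation rather than a verification. The paper simply differentiates $F(x)=x\sqrt{(x+m-1)/(x+m)}$ twice and factors the result into the manifestly positive form $F''(x)=\frac{(4m-1)x+4m(m-1)}{4(x+m)^{5/2}(x+m-1)^{3/2}}$ --- this is exactly the ``alternative that avoids the squaring trick'' you mention in passing. Your main route, via $h=F^2$, is sound: your formulas $h'(x)=2x-1+\frac{m^2}{(x+m)^2}$ and $h''(x)=2-\frac{2m^2}{(x+m)^3}$ are right, monotonicity is immediate, and you correctly recognize both that convexity of $h$ does not give convexity of $F$ and that the real content is $2hh''-(h')^2>0$. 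The reason this step deserves more than ``I expect this to come down to checking coefficients'' is that the leading terms cancel: as $x\to\infty$, $2hh''\sim 4x^2\sim (h')^2$, so positivity rests entirely on lower-order terms and is not a priori guaranteed by crude bounds. It does, however, close exactly as you hope: clearing the denominator gives
$$(x+m)^4\bigl(2h(x)h''(x)-h'(x)^2\bigr)=(4m-1)x^4+4m(m-1)x^3,$$
whose coefficients are nonnegative (and $4m-1>0$) for $m\geq 1$, consistent with the paper's factored $F''$ since $2hh''-(h')^2=4h^{3/2}F''$. Your closing mean-value/strict-convexity argument for $F(x_1+1)-F(x_1)>F(x_2)-F(x_2-1)$ is fine and is precisely the deduction the paper makes implicitly. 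To turn the proposal into a proof you must actually carry out the polynomial expansion above (or switch to the direct second-derivative computation, which lands on the clean factorization with less cancellation to manage); as written, the crux is named but not discharged.
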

\begin{proof}
Note that $F(x)=xf(x+m,1)=x\sqrt{\frac{x+m-1}{x+m}}$ , then we have
\begin{align*}
F'(x) & = \sqrt{\frac{x+m-1}{x+m}}+\frac{x}{2}\cdot\sqrt{\frac{x+m}{x+m-1}}\cdot \frac{1}{(x+m)^2}  \\
 & = \sqrt{\frac{x+m-1}{x+m}}\left(1+\frac{1}{2}\cdot \frac{x}{(x+m-1)(x+m)}\right)>0
\end{align*}
and
\begin{align*}
F''(x) & = \frac{1}{2}\cdot\sqrt{\frac{x+m}{x+m-1}}\cdot\frac{1}{(x+m)^2}\left(1+\frac{1}{2}\left(\frac{m}{x+m}-\frac{m-1}{x+m-1}\right)\right) \\
&  \quad\quad\quad\quad\quad\quad +\sqrt{\frac{x+m-1}{x+m}}\left(-\frac{m}{2(x+m)^2}+\frac{m-1}{2(x+m-1)^2}\right) \\
 & = \sqrt{\frac{x+m-1}{x+m}}\left(\frac{1+\frac{1}{2}(\frac{m}{x+m}-\frac{m-1}{x+m-1})}{2(x+m-1)(x+m)}-\frac{m}{2(x+m)^2}+\frac{m-1}{2(x+m-1)^2}\right) \\
 & = \frac{(4m-1)x+4m(m-1)}{4(x+m)^{\frac{5}{2}}(x+m-1)^{\frac{3}{2}}}>0
\end{align*}
when $x,m\geq 1$.
\end{proof}

\begin{lemma}\label{theorem2-6}
Let $G(a,b)= f(a,b-1)-f(a-1,b)$ for some $a>b>0$. Then $G(a,b)>0$.
\end{lemma}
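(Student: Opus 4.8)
The key observation is that, once the definition of $f$ is substituted, the two quantities being compared share the same radicand in their numerators, so the whole problem reduces to comparing denominators. First I would record
$$f(a,b-1)=\sqrt{\frac{a+b-3}{a(b-1)}}\qquad\text{and}\qquad f(a-1,b)=\sqrt{\frac{a+b-3}{(a-1)b}}.$$
For these to be legitimate positive reals one needs $b-1\geq 1$, $a-1\geq 1$ and $a+b-3>0$; in every situation where $G(a,b)$ is invoked, $a$ and $b$ are vertex degrees with $a>b$ and $b\geq 2$, hence $a\geq 3$ and $a+b-3\geq 2>0$, so both radicands are strictly positive. (More intrinsically, one simply assumes $a,b$ lie in the range where the displayed square roots are defined.)

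Next, since the common factor $\sqrt{a+b-3}$ is positive, the desired inequality $G(a,b)>0$ is equivalent to the inequality between the remaining factors,
$$\frac{1}{\sqrt{a(b-1)}}>\frac{1}{\sqrt{(a-1)b}},$$
which, after squaring (both sides being positive) and clearing the positive denominators, is the same as $(a-1)b>a(b-1)$.

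Finally it remains only to expand: $(a-1)b-a(b-1)=ab-b-ab+a=a-b>0$ by the hypothesis $a>b$, and the proof is complete. The argument is entirely computational; the single point requiring a moment's care is confirming that $a+b-3>0$ (equivalently, that the square roots above make sense) on the range of $a,b$ to which the lemma is applied, and this is the only conceivable obstacle.
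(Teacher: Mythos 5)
Your proof is correct and is exactly the ``direct calculation'' that the paper leaves to the reader: both radicands share the numerator $a+b-3$, so the inequality reduces to $(a-1)b>a(b-1)$, i.e.\ $a>b$. Your added care about when the square roots are defined (in every application of the lemma one has $b\geq 2$, hence $a+b-3>0$) is a point the paper glosses over, but the argument is the same.
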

\begin{proof}
This follows from direct calculations.
\end{proof}

In the following sections we will first explore the maximum ABC index of graphs of given order and various fixed parameters. Based on these results some computational analysis is provided. In the end we briefly discuss some other questions and pose a couple of conjectures.

\section{Maximum ABC index with given independence number or number of pendent vertices}

In this section we characterize the extremal graph on $n$ vertices, with given independence number (Theorem~\ref{theorem4-1}) and with given number of pendent vertices (Theorem~\ref{theorem5-1}).

\begin{definition}\label{def:cup}
For two vertex-disjoint graphs $G$ and $H$, the $join$ of $G$ and $H$, denoted by $G\vee H$, is a graph with vertex set
$V(G)\cup V(H)$ and edge set $E(G)\cup E(H)\cup \{xy\mid x\in V(G), y\in V(H)\}$.
\end{definition}

\begin{theorem}\label{theorem4-1}
 Let $G$ be a connected graph on $n$ vertices and independence number $\beta$. Then
 $$ABC(G)\leq \beta(n-\beta)\sqrt{\frac{2n-\beta-3}{(n-\beta)(n-1)}}+\frac{(n-\beta)(n-\beta-1)}{2}\sqrt{\frac{2n-4}{(n-1)(n-1)}}$$
 with equality if and only if $G \cong \overline{K_{\beta}}\bigvee K_{n-\beta}$.
\end{theorem}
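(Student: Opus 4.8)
The plan is to show that $S:=\overline{K_\beta}\vee K_{n-\beta}$ is the unique connected graph on $n$ vertices with independence number $\beta$ attaining the maximum ABC index, using only the edge-addition principle of Theorem~\ref{theorem2-2}. First I would record the two basic facts that make $S$ admissible: it is connected whenever $n-\beta\ge 1$, which holds since a connected $n$-vertex graph has $\beta\le n-1$ (the cases $n\le 2$ being trivial); and its independence number is exactly $\beta$, because every independent set of $S$ either is contained in the $\overline{K_\beta}$ part or consists of a single vertex of the clique $K_{n-\beta}$, so the largest one has size $\max\{\beta,1\}=\beta$.

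The key structural step is the claim that every connected graph $G$ on $n$ vertices with $\beta(G)=\beta$ is, after a suitable relabelling, a spanning subgraph of $S$. To prove this, pick a maximum independent set $I\subseteq V(G)$ with $|I|=\beta$, put $R=V(G)\setminus I$ (so $|R|=n-\beta$), and identify $I$ with the vertex set of the $\overline{K_\beta}$ part of $S$ and $R$ with that of the $K_{n-\beta}$ part. Since $I$ is independent, $G$ has no edge inside $I$; every other edge of $G$ joins two vertices of $R$ or a vertex of $R$ to a vertex of $I$, and all such pairs are edges of $S$. Hence $E(G)\subseteq E(S)$.

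Now I would add the edges of $E(S)\setminus E(G)$ to $G$ one at a time; by Theorem~\ref{theorem2-2} the ABC index never decreases, so $ABC(G)\le ABC(S)$. For the equality discussion, assume $G\not\cong S$; then under the labelling above $E(G)\subsetneq E(S)$, so at least one edge is added. Taking $n\ge 3$, the connected graph $G$ has no isolated vertex, so the two endpoints of the first added edge are non-isolated, and Theorem~\ref{theorem2-2} gives a strict increase; thus $ABC(G)<ABC(S)$. Consequently $ABC(G)\le ABC(S)$ with equality if and only if $G\cong S$.

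Finally I would evaluate $ABC(S)$ explicitly. In $S$ the $\beta$ vertices of the $\overline{K_\beta}$ part have degree $n-\beta$ and the $n-\beta$ vertices of the clique have degree $n-1$; the $\beta(n-\beta)$ edges running between the parts each contribute $f(n-\beta,n-1)=\sqrt{\tfrac{2n-\beta-3}{(n-\beta)(n-1)}}$, and the $\binom{n-\beta}{2}$ edges inside the clique each contribute $f(n-1,n-1)=\sqrt{\tfrac{2n-4}{(n-1)(n-1)}}$; summing yields the stated bound. I do not expect a serious obstacle here: the points needing genuine care are only that the independence number of $S$ is not larger than $\beta$ (so that $S$ is itself a competitor) and that a maximum independent set of $G$ really does furnish the claimed embedding into $S$; everything else is bookkeeping. (Lemmas~\ref{theorem2-3}--\ref{theorem2-6} are not needed for this theorem — they become relevant only when the prescribed parameter, such as a fixed number of pendent vertices, prevents one from simply completing the graph to $S$.)
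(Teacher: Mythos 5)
Your proposal is correct and follows essentially the same route as the paper: fix a maximum independent set, observe that $G$ embeds as a spanning subgraph of $\overline{K_\beta}\vee K_{n-\beta}$, and invoke the edge-addition monotonicity of Theorem~\ref{theorem2-2} to conclude, finishing with the direct degree computation. Your version is in fact slightly more scrupulous than the paper's about checking that the join graph really has independence number $\beta$ and about the strictness in the equality case, but these are refinements of the same argument rather than a different approach.
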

\begin{proof}
Suppose $G^\ast$ is the graph with the maximum $ABC$ index among all $n-$vetex connected graphs with independence number $\beta$.

Let $S$ be a maximal independent set in $G^\ast$ with $|S| = \beta$. By Theorem~\ref{theorem2-2}, adding edges to a graph will increase its $ABC$ index. Thus each vertex $x$ in $S$ is adjacent to every vertex $y$ in $G^\ast-S$ and the subgraph induced by vertices in $G^\ast-S$
is $K_{n-\beta}$.
Consequently $G^\ast \cong \overline{K_{\beta}}\bigvee K_{n-\beta}$. Direct calculations yield
\begin{align*}
 & ABC(\overline{K_{\beta}}\bigvee K_{n-\beta}) \\
= & \beta(n-\beta)\sqrt{\frac{2n-\beta-3}{(n-\beta)(n-1)}}+\frac{(n-\beta)(n-\beta-1)}{2}\sqrt{\frac{2n-4}{(n-1)(n-1)}}.
\end{align*}
\end{proof}

\begin{definition}
For convenience we employ the following notations:
\begin{itemize}
\item let $K_n^p$ denote the graph obtained from attaching $p$ pendent edges to one vertex of $K_{n-p}$; and
\item let $G'$ denote the graph obtained from attaching $n-3$  pendent edges to one end of a path on three vertices.
\end{itemize}
\end{definition}

\begin{theorem}\label{theorem5-1}
Let $G$ be a connected graph on $n$ vertices with $p$ pendent vertices, then:
\begin{enumerate}
\item If $n-p=1$, then $ABC(G)=ABC(S_n)=\sqrt{(n-1)(n-2)}$;
\item If  $n-p=2$, then $ABC(G)\leq (n-3)\sqrt{\frac{n-3}{n-2}}+\sqrt{2}$
 with equality if and only if $G \cong  G'$;
\item If  $n-p>2$, then
\begin{align*}
 ABC(G) & \leq p\sqrt{\frac{n-2}{n-1}}+(n-p-1)\sqrt{\frac{2n-p-4}{(n-1)(n-p-1)}} \\
& \quad\quad\quad\quad\quad\quad\quad\quad +\frac{(n-p-1)(n-p-2)}{2}\sqrt{\frac{2n-2p-4}{(n-p-1)^{2}}}
\end{align*}
 with equality if and only if $G \cong K_n^p $.
\end{enumerate}
\end{theorem}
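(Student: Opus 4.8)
The plan is to let $G^\ast$ be the graph maximizing the ABC index among connected $n$-vertex graphs with exactly $p$ pendent vertices, and to argue structurally about $G^\ast$ in each of the three cases. Case (1) is immediate: if $n-p=1$ then every non-pendent vertex count is one, forcing $G\cong S_n$, and $ABC(S_n)=(n-1)f(n-1,1)=\sqrt{(n-1)(n-2)}$ by direct computation. For cases (2) and (3) the strategy is the same: first show that the non-pendent vertices induce a clique, then show that all pendent edges are attached to a single vertex, then finally show that vertex should have maximum degree (i.e. be adjacent to all other non-pendent vertices as well).

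For the clique step, let $U$ be the set of $n-p$ non-pendent vertices. If two vertices $u,v\in U$ are non-adjacent, I would like to add the edge $uv$; by Theorem~\ref{theorem2-2} this strictly increases the ABC index, and since $u,v$ already have degree $\geq 2$ this does not create or destroy pendent vertices, so the new graph is still a valid competitor — contradicting maximality. Hence $G^\ast[U]\cong K_{n-p}$. Similarly, if two pendent vertices hang off distinct vertices $u\neq v$ of $U$, I would try to move one pendent edge so both hang off $u$; here one must check via Lemma~\ref{theorem2-5} (the convexity/"shifting" inequality $F(x_1+1)-F(x_1)>F(x_2)-F(x_2-1)$ for $x_1\ge x_2$, applied with the appropriate $m$) that concentrating pendent edges at the higher-degree vertex does not decrease the index, with strict increase unless they were already concentrated. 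This pins down $G^\ast$ up to how the "hub" vertex $w$ carrying the $p$ pendents interacts with $U\setminus\{w\}$.

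In case $n-p=2$ we have $U=\{w,z\}$ with $wz\in E(G^\ast)$, $w$ carrying all $p=n-2$ pendents, and $z$ having no pendents, which forces $z$ to have degree at least $2$; since $|U|=2$ the only way is an extra edge, but there are no other non-pendent vertices, so in fact $z$ must be adjacent to $w$ and to at least one pendent vertex — reexamining, the extremal graph is $G'$, the path $P_3$ with $n-3$ pendents at one end, and one computes $ABC(G')=(n-3)f(n-2,1)+f(2,n-2)+$ (the middle edge contributes via $f(2,2)=\tfrac{\sqrt2}{2}$), giving $(n-3)\sqrt{\tfrac{n-3}{n-2}}+\sqrt2$. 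In case $n-p>2$, after the clique and concentration steps $G^\ast$ is $K_n^p$ possibly with $w$ not adjacent to all of $U\setminus\{w\}$; but adding any missing edge among $U$ again strictly increases the index by Theorem~\ref{theorem2-2} without changing $p$, so $w$ is adjacent to everything and $G^\ast\cong K_n^p$. The stated bound is then the direct evaluation of $ABC(K_n^p)$: the $p$ pendent edges each contribute $f(n-1,1)=\sqrt{\tfrac{n-2}{n-1}}$, the $n-p-1$ edges from $w$ to the rest of the clique contribute $f(n-1,n-p-1)=\sqrt{\tfrac{2n-p-4}{(n-1)(n-p-1)}}$, and the $\binom{n-p-1}{2}$ remaining clique edges contribute $f(n-p-1,n-p-1)=\sqrt{\tfrac{2n-2p-4}{(n-p-1)^2}}$.

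The main obstacle is the pendent-concentration step: moving a pendent edge from $v$ to $w$ changes the degrees of $v$, $w$, \emph{and} all of their neighbors within the clique, so the net change in $ABC$ is not a single application of Lemma~\ref{theorem2-5} but an aggregate of several $f$-differences. One must organize these so the clique-internal contributions telescope or cancel appropriately and the remaining terms are governed by the convexity of $F$ (with $m$ chosen to match $n-p$), and then separately handle the boundary subtlety in case $n-p=2$ where the "clique" $K_2$ is too small to absorb a degree-$2$ requirement in the same way, which is exactly why the extremal graph there is $G'$ rather than $K_n^p$. I would also double-check the degenerate subcases (e.g. $p=0$, or the hub coinciding with issues when $n-p-1$ is small) to make sure the claimed equality cases are exactly as stated.
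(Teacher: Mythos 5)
Your overall strategy is the same as the paper's: use Theorem~\ref{theorem2-2} to force the non-pendent vertices to induce a clique, then concentrate the pendent edges at a single clique vertex by a shifting argument, with the $n-p=2$ case treated separately because detaching the last pendent edge from a vertex of the $K_2$ would turn that vertex into a new pendent vertex (which is exactly why $G'$, and not $K_n^p$, is extremal there). The clique step, Case~1, and the evaluations of $ABC(K_n^p)$ and $ABC(G')$ are all fine.

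The genuine gap is precisely the step you yourself flag as ``the main obstacle'': you never verify that moving a pendent edge from $v_2$ to $v_1$ (where $a_1\geq a_2\geq 1$ count the pendents at the two vertices) increases the ABC index, and your guess that the clique-internal contributions ``telescope or cancel appropriately'' is not how the argument goes. In the paper the difference $ABC(G_2)-ABC(G^\ast)$ is split into three groups, each needing its own lemma. First, for every other clique vertex $v_i$ the net change on the pair of edges $v_1v_i$, $v_2v_i$ equals
$$g(a_1+n-p-1,\,a_i+n-p-1)-g(a_2+n-p-2,\,a_i+n-p-1)\geq 0,$$
which holds not by cancellation but because $g(x,y)=f(x+1,y)-f(x,y)$ is increasing in $x$ for $y\geq 2$ (Lemma~\ref{theorem2-4}); the sign here is not obvious a priori, since raising the degree of $v_1$ makes each of its clique edges contribute \emph{less} ($f(x,y)$ decreases in $x$ for $y\geq 3$), and the point is that this loss is outweighed by the gain on the edges at $v_2$. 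Second, the edge $v_1v_2$ itself contributes $f(a_1+n-p,a_2+n-p-2)-f(a_1+n-p-1,a_2+n-p-1)>0$, which is Lemma~\ref{theorem2-6}. Only the third group, the pendent-edge terms, is governed by the convexity of $F$ with $m=n-p-1$ (Lemma~\ref{theorem2-5}), as you correctly anticipate. Without the first two pieces made explicit the shifting argument, and hence the identification of the extremal graphs in Cases~2 and~3, is incomplete; the same decomposition (with the first group empty) is what settles the $n-p=2$ case and pins down $a_2=1$ there.
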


\begin{proof}
Let $G^\ast$ be the graph with the maximum $ABC$ index  among all $n-$vetex connected graphs with $p$ pendent vertices.

\noindent {\bf Case 1:} If $n-p=1$, then $G^\ast$ is the star.

\noindent {\bf Case 2:} If $n-p=2$, then $G^\ast$ is the graph obtained by attaching $a_1$ pendent edges to one vertex $v_1$
and $a_2(=p-a_1)$ pendent edges to the other vertex $v_2$ of $K_{2}$.

Assuming, without loss of generality, that $a_1\geq a_2\geq 1$, we claim that $a_1=p-1=n-3$ and $a_2=1$ (note that in this case $G^\ast \cong G'$).

Otherwise, if $a_2\geq 2$, let $G_1$ by obtained from $G^\ast$ by detaching and reattaching one of the pendent edges from $v_2$ to $v_1$. Then
\begin{align*}
ABC(G_1)-ABC(G^\ast) & =((a_1+1)f((a_1+1)+1,1)-a_1f(a_1+1,1)) \\
& \quad\quad\quad -(a_2f(a_2+1,1)-(a_2-1)f((a_2-1)+1,1)) \\
& \quad\quad\quad\quad\quad +(f(a_1+2,a_2)-f(a_1+1,a_2+1)).
\end{align*}
Let $x_1=a_1$, $x_2=a_2$ and $m=1$ in Lemma~\ref{theorem2-5}, we have
$$F(a_1+1,1)-F(a_1,1)>F(a_2,1)-F(a_2-1,1). $$
Or equivalently,
\begin{align*}
& (a_1+1)f((a_1+1)+1,1)-a_1f(a_1+1,1)) \\
& \quad\quad\quad\quad\quad -(a_2f(a_2+1,1)-(a_2-1)f((a_2-1)+1,1)>0.
\end{align*}
Applying Lemma~\ref{theorem2-6} with $a=a_1+2$ and $b=a_2+1$ yields $f(a_1+2,a_2)-f(a_1+1,a_2+1)>0$.

Consequently $ABC(G_1)-ABC(G^\ast)>0$, a contradiction. The conclusion then follows from direct calculations.

\noindent {\bf Case 3:} If $n-p>2$, let $P$ be the set of pendent vertices in $G^\ast$ with $|P| = p$. Again by Theorem~\ref{theorem2-2}, the subgraph induced by vertices in $G^\ast-P$ must be $K_{n-p}$. Label the vertices of this $K_{n-p}$
  as $v_1,v_2,\cdots v_{n-p}$ and let the number of pendent vertices adjacent to each vertex $v_i$ be $a_i$ with
$a_1\geq a_2\geq \cdots \geq a_{n-p}\geq 0$.

If $a_1=p$ and $a_2=\cdots =a_{n-p}= 0$, then $G^\ast\cong K_n^p$.

If $G^\ast\ncong K_n^p$, then $a_1\geq a_2\geq 1$. Consider $G_2$ obtained from detaching one of the pendent edges of $v_2$ and reattaching  to $v_1$. We have
\begin{align*}
  & ABC(G_2)-ABC(G^\ast) \\
    = & \sum_{i=3}^{n-p}[(f(a_1+n-p,a_i+n-p-1)-f(a_1+n-p-1,a_i+n-p-1))\\
    & \quad\quad -(f(a_2+n-p-1,a_i+n-p-1)-f(a_2+n-p-2,a_i+n-p-1))]\\
    & \quad +(f(a_1+n-p,a_2+n-p-2)-f(a_1+n-p-1,a_2+n-p-1)) \\
    & \quad +((a_1+1)f((a_1+1)+n-p-1,1)-a_1f(a_1+n-p-1,1)) \\
    & \quad -(a_2f(a_2+n-p-1,1)-(a_2-1)f((a_2-1)+n-p-1,1)).
    \end{align*}
 For each $i=3,4,\cdots,n-p$, we have $a_i+n-p-1\geq 2$ , $a_1+n-p-2\geq 2$ and $a_1+n-p-1>a_2+n-p-2$. Then
 by Lemma \ref{theorem2-4},
\begin{align*}
 & (f(a_1+n-p,a_i+n-p-1)-f(a_1+n-p-1,a_i+n-p-1)) \\
& \quad\quad\quad -(f(a_2+n-p-1,a_i+n-p-1)-f(a_2+n-p-2,a_i+n-p-1)) \\
& \quad\quad\quad\quad\quad\quad \geq 0.
\end{align*}
From $a_1+n-p>a_2+n-p-1$ and Lemma \ref{theorem2-6}, we have
\begin{align*}
& G(a_1+n-p,a_2+n-p-1) \\
= & f(a_1+n-p,a_2+n-p-2)-f(a_1+n-p-1,a_2+n-p-1)>0.
\end{align*}
Let $m=n-p-1$, by Lemma \ref{theorem2-5}, we have
$$F(a_1+1)-F(a_1)>F(a_2)-F(a_2-1) $$
for $a_1\geq a_2$.

As a consequence we have
\begin{align*}
& ((a_1+1)f((a_1+1)+n-p-1,1)-a_1f(a_1+n-p-1,1)) \\
& \quad\quad\quad -(a_2f(a_2+n-p-1,1)-(a_2-1)f((a_2-1)+n-p-1,1))>0
\end{align*}
and hence $ABC(G_2)-ABC(G^\ast)>0$, a contradiction. Thus $G \cong K_n^p $ and the conclusion follows.
\end{proof}

\section{Maximum ABC index with given edge-connectivity}

In this section we consider the maximum ABC index of graphs of given order and edge-connectivity. The conclusion is, to some extent, expected. But the proof turned out to be rather complicated.

\begin{theorem}\label{theorem6-1}
 Let $G$ be a connected graph on $n\geq 6$ vertices and edge-connectivity $k \geq 2$. Then
 $$ABC(G)\leq k\sqrt{\frac{n+k-3}{k(n-1)}}+\frac{k(k-1)}{2(n-1)}\sqrt{2n-4}+\frac{(n-k-1)(n-k-2)}{2(n-2)}\sqrt{2n-6}$$\\
 $$+k(n-k-1)\sqrt{\frac{2n-5}{(n-1)(n-2)}}$$
 with equality if and only if $G \cong K_k\vee (K_1+K_{n-k-1}) $.
\end{theorem}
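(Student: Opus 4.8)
The plan is to prove that a connected $n$-vertex graph $G^*$ of edge-connectivity $k$ with maximum $ABC$ index must be $K_k\vee(K_1+K_{n-k-1})$, and then to read off the value by a direct computation. First I would fix a minimum edge cut of $G^*$, partitioning $V(G^*)$ into $X,Y$ with $|X|=a\le|Y|=b$, $a+b=n$, and exactly $k$ edges between them. Adding any missing edge inside $X$ or inside $Y$ leaves $[X,Y]$ a cut of size $k$, hence keeps the edge-connectivity equal to $k$, while strictly raising the $ABC$ index by Theorem~\ref{theorem2-2}; so maximality forces $G^*[X]=K_a$ and $G^*[Y]=K_b$, i.e. $G^*$ is two cliques joined by exactly $k$ edges.

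Next I would handle the case where $G^*$ has a vertex $v$ of degree $k$ (note $\delta(G^*)\ge k$ always). Then $[\{v\},V\setminus\{v\}]$ is a minimum edge cut, and for every non-edge $e$ of $G^*$ the graph $G^*+e$ is connected with edge-connectivity $\ge k$; since $ABC(G^*+e)>ABC(G^*)$, maximality rules out edge-connectivity exactly $k$, so it is $\ge k+1$, which forces $e$ to cross every $k$-cut and in particular to be incident with $v$. Thus every non-edge is at $v$, so $G^*-v=K_{n-1}$ and, as $d(v)=k$, $G^*\cong K_k\vee(K_1+K_{n-k-1})$ (a short check shows this graph is connected with edge-connectivity exactly $k$).

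It remains to exclude $\delta(G^*)\ge k+1$. With $p_x$ the number of edges from $x\in X$ to $Y$ we have $(a-1)+p_x=d(x)\ge k+1$, so $k=\sum_{x\in X}p_x\ge a(k+2-a)$ when $a\le k+1$; this is impossible since $a(k+2-a)\ge k+1$ on $\{1,\dots,k+1\}$, hence $a\ge k+2$ and so $b\ge a\ge k+2$. Then at least $a-k\ge2$ vertices of $X$ meet no cross edge; pick one, $x_0$, and let $G'$ arise from $G^*$ by deleting the $a-1$ edges of $x_0$ into $X$ and adding the $b$ edges of $x_0$ to all of $Y$ (``move $x_0$ into $Y$''). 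Then $G'$ consists of cliques $K_{a-1}$ and $K_{b+1}$ joined by the same $k$ edges; since $a-1\ge k+1$, one checks $G'$ is connected with edge-connectivity $k$, hence admissible. Thus it is enough to show $ABC(G')>ABC(G^*)$, which contradicts maximality and, combined with the previous paragraph, completes the proof.

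For the inequality $ABC(G')>ABC(G^*)$ I would split the difference into contributions from: the $\binom{a-1}{2}$ edges inside $X\setminus\{x_0\}$, each of which gains value since its endpoints lose a neighbour; the $a-1$ deleted edges at $x_0$; the $b$ new edges at $x_0$, which now has degree $b$; the $\binom{b}{2}$ edges inside $Y$, each of which loses value since its endpoints gain a neighbour; and the $k$ cross edges, which shift by a small amount. Using Lemmas~\ref{theorem2-3}--\ref{theorem2-6} (the behaviour of $f(\cdot,1)$, $f(\cdot,2)$ and $f(\cdot,y)$ for $y\ge3$, the monotonicity of $g$, the convexity of $F$, and the positivity of $f(a,b-1)-f(a-1,b)$), one shows that the $b$ new high-degree edges outweigh the $a-1$ deleted ones (here $b\ge a$ is crucial), that the net of the two internal groups is controlled, and that the cross-edge terms are negligible, so the total is strictly positive. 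This estimate is the hard part: near the balanced case $a=b=k+2$ the leading terms almost cancel, so the argument has to exploit the finer behaviour of these functions rather than crude bounds. Finally, with $G^*\cong K_k\vee(K_1+K_{n-k-1})$ in hand, computing $ABC$ over its four edge types---$v$ to the $K_k$, inside the $K_k$, inside the $K_{n-k-1}$, and between the $K_k$ and the $K_{n-k-1}$---yields the stated expression.
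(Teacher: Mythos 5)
Your structural reduction is sound and matches the paper's: both sides of a minimum $k$-edge cut must be cliques by Theorem~\ref{theorem2-2}, and in the case where some vertex $v$ has degree exactly $k$ your observation that every non-edge of $G^\ast$ must be incident with $v$ (else adding it preserves edge-connectivity $k$ and raises the index) is essentially the paper's argument and correctly yields $G^\ast\cong K_k\vee(K_1+K_{n-k-1})$. The counting that forces $a\geq k+2$ when $\delta(G^\ast)\geq k+1$, and the existence of a vertex $x_0\in X$ with no cross edges, are also fine.

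The genuine gap is the step you yourself flag as ``the hard part'': the inequality $ABC(G')>ABC(G^\ast)$ for the one-vertex shift is never established, and it is precisely the content of the theorem --- everything before it is routine. Invoking Lemmas~\ref{theorem2-3}--\ref{theorem2-6} and saying the new edges ``outweigh'' the deleted ones is not a proof; near $a=b$ the clique contributions $\tfrac{a}{2}\sqrt{2a-4}+\tfrac{b}{2}\sqrt{2b-4}$ change only at second order in the shift, the $k$ cross-edge terms move in a direction that depends on which endpoints' degrees change, and the whole comparison is a difference of quantities of order $n^{3/2}$ that must be resolved to accuracy $O(k/\sqrt{n})$. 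The paper does not attempt this local exchange at all: it instead bounds $ABC(G^\ast)$ from above by $\tfrac{n_1^{3/2}}{\sqrt 2}+\tfrac{n_2^{3/2}}{\sqrt 2}+kf(n_1,n_2)$, bounds the candidate extremal graph from below, and proves the resulting inequality (Claim~\ref{claim}) through a monotonicity analysis in $n_1$ and $k$ that ultimately rests on a degree-$5$ polynomial identity verified with computer algebra, plus finite case checks for $6\leq n\leq 48$. A further warning sign that your exchange inequality is not a consequence of the stated lemmas: the authors explicitly leave the analogous monotonicity statement for $k=1$ (their concluding remarks) as an open conjecture. So your proposal is a plausible alternative strategy, but as written it assumes the theorem's essential estimate rather than proving it; to complete it you would need either a careful expansion of the five groups of terms with explicit error control, or a retreat to the paper's endpoint comparison.
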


Note that $K_k\vee (K_1+K_{n-k-1})$ is simply the graph obtained from joining one vertex with $k$ of the vertices in $K_{n-1}$.

\begin{proof} Suppose $G^\ast$ is the graph with maximum $ABC$ index among all graphs of order $n\geq 6$ and edge-connectivity $k\geq 2$,
let ${e_1,e_2,\cdots,e_k}$ be a $k$-edge cut in $G^\ast$ and let $G_1,G_2$ be the connected components in $G^\ast-\{e_1,e_2,\cdots,e_k\}$. Again by Theorem \ref{theorem2-2}, both $G_1$ and $G_2$ must be complete graphs. Let $n_i$ be the number of vertices of $G_i$ ($i=1,2)$, then $n_1 + n_2 = n$.

Without loss of generality, let $n_2\geq n_1$. If $n_1=1$, then $G^\ast \cong K_k\vee (K_1+K_{n-k-1})$.

Now we focus on the case of $n_2\geq n_1\geq 2$. For $i=1,2$, $G_i$ has $\frac{n_i(n_{i}-1)}{2}$ edges, for $G_i$ is a complete graph. On the other hand, the sum of degrees of all vertices in $G_i$ is at least $n_ik$, for the minimum degree of $G^\ast$ is at least $k$. Thus $G_i$ has at least $\frac{n_ik-k}{2}$ edges. Hence $\frac{n_i(n_i-1)}{2}\geq \frac{n_ik-k}{2} = \frac{k(n_i-1)}{2}$, implying that $n_i\geq k$. Consequently we can assume $n_2\geq n_1\geq k$.

Firstly, if there is  a vertex, say $v$ in $V(G^\ast)$, of degree $k$. Let $v_1,\cdots v_k$ be the neighbors of $v$. Write $A=\{v_1,\cdots v_k\}$ and $B=V(G^\ast)\backslash \{v,v_1,\cdots v_k\}$. If $G[A\cup B]$, the subgraph of $G^\ast$ induced by $V(A\cup B)$, is the complete graph $K_{n-1}$, then
 $G^\ast \cong K_k\vee (K_1+K_{n-k-1}) $ as claimed.

Otherwise, there exsit $x,y\in V(G^\ast)$ such $xy\in E(\overline{G[A\cup B]})$. But then the graph $G'=G^\ast + xy$ still have edge-connectivity $k$ with $ABC(G')>ABC(G^\ast)$ by Theorem \ref{theorem2-2}, contradiction.

If, on the other hand, $d_{G^\ast}(v)\geq k+1$ for every vertex $v\in V(G^\ast)$. Then we must have $n_2\geq n_1\geq k+1$ by similar arguments. We now show that the maximum $ABC$ index cannot be achieved in this case.

If $n_2\geq n_1\geq k+1\geq 3$, by  Lemma \ref{theorem2-3} we have
\begin{align*}
ABC(G^\ast) & < \frac{n_1(n_1-1)}{2}f(n_1-1,n_1-1) \\
& \quad\quad\quad\quad\quad\quad +\frac{n_2(n_2-1)}{2}f(n_2-1,n_2-1)+kf(n_1,n_2) \\
& <\frac{n_1^\frac{3}{2}}{\sqrt{2}}+\frac{n_2^\frac{3}{2}}{\sqrt{2}}+k\sqrt{\frac{n_1+n_2-2}{n_1n_2}}.
\end{align*}

Note that when $n_1=1$, also by Lemma \ref{theorem2-3} we have
\begin{align*}
ABC(G^\ast) & > kf(k,n-1)+\frac{(n-1)(n-2)}{2}f(n-1,n-1) \\
 & = k\sqrt{\frac{n+k-3}{k(n-1)}}+\frac{(n-1)(n-2)}{2}\sqrt{\frac{2(n-2)}{(n-1)(n-1)}}\\
& =k\sqrt{\frac{n+k-3}{k(n-1)}}+\frac{(n-2)^\frac{3}{2}}{\sqrt{2}}.
\end{align*}

We now make use of the following fact.

\begin{claim}\label{claim}
For $n \geq 10$ and $3\leq k+1\leq n_1\leq \frac{n}{2}$,
\begin{equation}\label{eq:claim}
k\sqrt{\frac{n+k-3}{k(n-1)}}+\frac{(n-2)^\frac{3}{2}}{\sqrt{2}}>\frac{n_1^\frac{3}{2}}{\sqrt{2}}+
\frac{n_2^\frac{3}{2}}{\sqrt{2}}+k\sqrt{\frac{n_1+n_2-2}{n_1n_2}}.
\end{equation}
\end{claim}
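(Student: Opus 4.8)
The plan is to first reduce the right-hand side of \eqref{eq:claim} to its worst case in $n_1$, and then to settle the remaining inequality in the two parameters $n$ and $k$ by a monotonicity-in-$n$ argument together with a few base cases.

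\emph{Step 1: worst case in $n_1$.} Put $n_2=n-n_1$ and regard
$$R(n_1)=\frac{n_1^{3/2}+(n-n_1)^{3/2}}{\sqrt2}+k\sqrt{\frac{n-2}{n_1(n-n_1)}}$$
as a function of $n_1$ on the admissible interval. Since $t\mapsto t^{3/2}$ is convex, $n_1^{3/2}+(n-n_1)^{3/2}$ is non-increasing as $n_1$ grows toward $n/2$; and since $n_1(n-n_1)$ is non-decreasing on $(0,n/2]$, the second summand is non-increasing there as well. Hence $R$ is maximized at the smallest admissible $n_1$, and it suffices to prove \eqref{eq:claim} at that value. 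In the situation where the Claim is applied every vertex of $G^\ast$ has degree at least $k+1$, so $G_1=K_{n_1}$ would force each of its $n_1$ vertices to meet one of the $k$ cut edges whenever $n_1\leq k+1$; since this is impossible, one in fact has $n_1\geq k+2$, and it is cleanest to run the reduction at $n_1=k+2$, $n_2=n-k-2$.

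\emph{Step 2: one-variable inequality in $n$.} After Step~1 the Claim reads $\Delta(n,k)>0$ for a fixed small value of $n_1$, and the only two large terms are $\tfrac1{\sqrt2}(n-2)^{3/2}$ on the left and $\tfrac1{\sqrt2}(n-n_1)^{3/2}$ on the right. Their difference equals $\tfrac{3}{2\sqrt2}\int_{\,n-n_1}^{\,n-2}\sqrt{t}\,dt$, which is positive (because $n_1\geq3$) and, crucially, increasing in $n$. Freezing $k$ and differentiating $\Delta$ in $n$, this difference is the dominant contribution: the summands $k\sqrt{\tfrac{n+k-3}{k(n-1)}}$ and $-k\sqrt{\tfrac{n-2}{n_1(n-n_1)}}$ are bounded and their $n$-derivatives are negligible beside it (the latter summand even contributing with the favorable sign), so $\partial_n\Delta>0$ throughout the admissible range. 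It then remains to verify $\Delta>0$ at the smallest admissible $n$, namely $n_0=\max\{10,\,2n_1\}$, for each $k\geq2$: when $n_0=10$ this is a short list of explicit evaluations, and when $n_0=2n_1$ one has $n_1=n_2=n/2$ and $(n-2)^{3/2}>2\,n_1^{3/2}$, so the leftover square-root terms are easily controlled.

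\emph{Main obstacle.} The delicate point is the near-cancellation of the two large terms: once $n_1$ is as small as the constraints permit, the surviving left-hand margin is only of constant order, comparable to $k\sqrt{\tfrac{n+k-3}{k(n-1)}}$ itself, so no lossy estimate of the three smaller quantities can be tolerated and the monotonicity-in-$n$ step must be executed with explicit constants. In particular, under the bare hypothesis $n_1\geq k+1$ the inequality already fails at $n=10$, $n_1=3$, $k=2$, so it is essential to use the sharper structural bound $n_1\geq k+2$; the finitely many base cases are then checked numerically.
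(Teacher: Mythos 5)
Your central observation is correct and important: the Claim as literally stated is \emph{false}. At $(n,k,n_1)=(10,2,3)$ the left side is $2\sqrt{9/18}+8^{3/2}/\sqrt{2}\approx 17.41$ while the right side is $(3^{3/2}+7^{3/2})/\sqrt{2}+2\sqrt{8/21}\approx 18.00$, and the failure persists for $k=2$, $n_1=3$ up through $n=13$. The paper tacitly works around this: in its final case analysis it invokes $h(n,k+1)\geq l(n,k,k+1)$ for $10\leq n\leq 13$ only when $k\geq 3$, and disposes of $k=2$ there by a direct comparison of ABC indices. Your repair --- that the hypothesis $d(v)\geq k+1$ for all $v$ actually forces $n_1\geq k+2$, since otherwise all $n_1\leq k+1$ vertices of $G_1$ would each need an incident cut edge --- is legitimate, is exactly what the paper's own edge-counting argument gives if pushed one step further, and is arguably a cleaner fix than the paper's ad hoc case split.

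On the proof route itself: your Step 1 (both summands of the right-hand side are nonincreasing in $n_1$ on $(0,n/2]$, so push $n_1$ to its smallest admissible value) coincides with the paper's first reduction. After that you genuinely diverge. The paper fixes $n_1=k+1$, proves that $l(n,k,k+1)$ is increasing in $k$ via a degree-eight polynomial identity checked partly by computer, reduces to $k\approx n/2-1$, and finishes with asymptotics for $n\geq 49$ plus finite verification for $10\leq n\leq 48$. You instead fix $k$ (and $n_1=k+2$) and prove $\partial_n\Delta>0$, reducing to the base cases $n=\max\{10,2k+4\}$; this avoids the polynomial computation entirely, which is a real gain. Two steps still need to be made honest. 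First, ``negligible beside it'' must become an explicit bound: the only unfavorable term in $\partial_n\Delta$ is $-\frac{\sqrt{k}(k-2)}{2(n-1)^{3/2}\sqrt{n+k-3}}$, and you need it dominated by $\frac{3k}{2\sqrt{2}\,(\sqrt{n-2}+\sqrt{n-k-2})}$; this does hold with room to spare (the ratio is at most $\frac{2n}{3(n-1)^{2}}$ for $k\leq n/2$), but it is not automatic from boundedness. Second, the base cases $n=2k+4$ form an \emph{infinite} family, one for each $k\geq 3$, so they cannot be dispatched by ``a short list of explicit evaluations''; you need a uniform argument there. One is available --- with $n_1=n_2=n/2$ the quantity $k\bigl(\sqrt{(n-2)/(n/2)^2}-\sqrt{(n+k-3)/(k(n-1))}\bigr)$ is nonpositive for small $n$ and only of order $\sqrt{n}$ in general, while $(n-2)^{3/2}-2(n/2)^{3/2}$ grows like $(1-2^{-1/2})n^{3/2}$, with the tightest case $n=10$, $k=3$ holding by a margin of about $0.3$ --- but that computation is not in your write-up. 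With those two points filled in, your argument is a valid, and tidier, proof of the corrected statement.
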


Note that our conclusion follows from \eqref{eq:claim}. For $6\leq n\leq 9$, it is easy to check that the case $n_1=1$ yields larger $ABC$ index than  the case $n_1\geq k+1$, for each $2\leq k\leq \frac{n}{2}-1$.
\end{proof}

In the rest of this section we provide a proof to \eqref{eq:claim}.

\begin{proof}[Proof of Claim~\ref{claim}]

First note that \eqref{eq:claim} is equivalent to
$$ (n-2)^\frac{3}{2}-(n_1^\frac{3}{2}+(n-n_1)^\frac{3}{2})>\sqrt{2}k\left(\sqrt{\frac{n-2}{n_1(n-n_1)}}-\sqrt{\frac{n+k-3}{k(n-1)}}\right) . $$
For $3\leq k+1\leq n_1\leq \frac{n}{2}$, let
$$h(n,n_1)=(n-2)^\frac{3}{2}-(n_1^\frac{3}{2}+(n-n_1)^\frac{3}{2})$$
and
$$l(n,k, n_1)=\sqrt{2}k\left(\sqrt{\frac{n-2}{n_1(n-n_1)}}-\sqrt{\frac{n+k-3}{k(n-1)}}\right).$$
Then $h(n,n_1)$ is strictly increasing and $l(n,k, n_1)$ is strictly decreasing for $3\leq k+1\leq n_1\leq \frac{n}{2}$. Hence
$$ h(n,n_1)\geq h(n,k+1)\geq h(n,3)=(n-2)^\frac{3}{2}-(3^\frac{3}{2}+(n-3)^\frac{3}{2}) $$
and $l(n,k, n_1)\leq l(n,k, k+1)$.

We now show that
$$l(n,k, k+1)=\sqrt{2}k\left(\sqrt{\frac{n-2}{(k+1)(n-k-1)}}-\sqrt{\frac{n+k-3}{k(n-1)}}\right)$$
is increasing for $2\leq k\leq \frac{n}{2}-1$ and $n\geq 20$.

It is easy to obtain the formula (which we skip for it is too long and not informative) of $l'_k(n,k, k+1)$, and see (with the help of computer) that $l'_k(n,k, k+1)$ is positive for some particular values of $k$ and $n$. We may claim that $l'_k(n,k, k+1)$ is positive for $2\leq k\leq \frac{n}{2}-1$ and $n\geq 20$ by showing $l'_k(n,k, k+1)=0$ is not possible (and hence $l'_k(n,k, k+1)$ must be always positive).

Thanks to computer algebra, we have that $l'_k(n,k, k+1)=0$ is equivalent to
 \begin{align*}
 0 = & (k^{2}+k-1)n^{5}-(8k^{2}+6k-9)n^{4}+(5k^{5}+8k^{4}+19k^{2}+6k-30)n^{3}\\
 & \quad +(k^{6}-24k^{5}-42k^{4}-2k^{3}-3k^{2}+20k+46)n^{2}\\
 & \quad -(8k^{7}+6k^{6}-57k^{5}-75k^{4}+10k^{3}+36k^{2}+39k+33)n\\
 & \quad +4k^{8}+12k^{7}-3k^{6}-46k^{5}-37k^{4}+16k^{3}+27k^{2}+18k+9,
\end{align*}
or equivalently
\begin{align*}
0 = & ((k^{2}+k-1)n^{5}-(8k^{2}+6k-9)n^{4}) \\
& \quad +[(k^{5}+8k^{4}+19k^{2}+6k-30)n^{3} \\
 & \quad\quad\quad\quad -(24k^{5}+42k^{4}+2k^{3}+3k^{2}-20k-46)n^{2}] \\
& \quad +[(57k^{5}+75k^{4}-10k^{3}-36k^{2}-39k-33)n \\
& \quad\quad\quad\quad -(46k^{5}+37k^{4}-16k^{3}-27k^{2}-18k-9)] \\
& \quad +(4k^{5}n^{3}+k^{6}n^{2}-(8k^{7}+6k^{6})n)+4k^{8}+12k^{7}-3k^{6} .
\end{align*}
For simplicity we denote the above expression by $H(n,k)$. It is then straightforward to check the followings:
\begin{itemize}
\item $(k^{2}+k-1)n^{5}-(8k^{2}+6k-9)n^{4}>0$ when $n\geq 8$;
\item  $(k^{5}+8k^{4}+19k^{2}+6k-30)n^{3}
 -(24k^{5}+42k^{4}+2k^{3}+3k^{2}-20k-46)n^{2}>0$ when $n\geq 24$;
\item  for $2\leq k< \frac{n}{2}$, we have $4k^{8}+12k^{7}-3k^{6}>0$ and
  $4k^{5}n^{3}+k^{6}n^{2}-(8k^{7}+6k^{6})n>0$;
\item for $20\leq n\leq 23$, simple calculation shows $H(n,k)>0$.
\end{itemize}
Thus, $l(n,k, k+1)$ is increasing when  $n\geq 20$ and $2\leq k\leq \frac{n}{2}-1$.

Consequently
$$l(n,k, k+1)\leq l(n,\frac{n}{2}-1,\frac{n}{2}) $$
when $n$ is even and
$$l(n,k, n_1)\leq l(n,k, k+1)\leq l(n,\frac{n-1}{2}-1,\frac{n-1}{2})$$
when $n$ is odd. We now discuss different cases to finish the proof:
\begin{itemize}
\item If $10\leq n\leq 13$, the case $n_1=1$ yields larger $ABC$ index than  the case $n_1\geq k+1=3$ and we always have $h(n,k+1) \geq l(n,k,k+1)$ for $3 \leq k \leq \frac{n}{2}-1$;
\item If $14\leq n\leq 19$, we always have $h(n,k+1)\geq l(n,k, k+1)$ for $2\leq k\leq \frac{n}{2}-1$;
\item If $20\leq n\leq 48$, we always have $h(n,3)\geq l(n,\frac{n}{2}-1,\frac{n}{2})$ when $n$ is even and
  $h(n,3)\geq l(n,\frac{n-1}{2}-1,\frac{n-1}{2})$ when $n$ is odd;
\item If $n\geq 49$, we have
\begin{align*}
 h(n,3) & =(n-2)^\frac{3}{2}-(3^\frac{3}{2}+(n-3)^\frac{3}{2}) \\
& =\frac{3n^{2}-15n+19}{\sqrt{n^{3}-6n^{2}+12n-8}+\sqrt{n^{3}-9n^{2}+27n-27}}-3^\frac{3}{2} \\
& >\frac{3n^{2}-15n+19}{\sqrt{n^3}+\sqrt{n^3}}-3^\frac{3}{2} \\
& >\frac{3}{2}(n^\frac{1}{2}-5n^{-\frac{1}{2}})-3^\frac{3}{2} .
\end{align*}
On the other hand,
\begin{align*}
 & l\left(n,\frac{n}{2}-1,\frac{n}{2}\right) \\
= & \sqrt{2}\left(\frac{n}{2}-1\right)\left(\sqrt{\frac{n-2}{\frac{n}{2}(n-\frac{n}{2})}}-\sqrt{\frac{n+\frac{n}{2}-1-3}{(\frac{n}{2}-1)(n-1)}}\right) \\
 < & \frac{\sqrt{2n}}{2}\frac{n^{3}-12n^{2}+32n-16}{2\sqrt{n^{6}-6n^{5}+13n^{4}-12n^{3}+4n^{2}}+\sqrt{3n^{6}-11n^{5}+8n^{4}}}\\
< & \frac{\sqrt{2n}}{2}\frac{1+32n^{-2}}{2\sqrt{1-6n^{-1}}+\sqrt{3-11n^{-1}}} \\
\leq & \frac{\sqrt{2n}}{2}\frac{1+32n^{-2}}{2\sqrt{1-6\times 49^{-1}}+\sqrt{3-11\times 49^{-1}}} \\
= & \frac{\sqrt{2n}}{2} \frac{7}{2\sqrt{43}+\sqrt{136}}\left(1+32n^{-2}\right) \\
< & 0.2n^{\frac{1}{2}}+7n^{-\frac{3}{2}}
\end{align*}
when $n$ is even. For $n\geq 49$, it is easy to see that
$$ 1.3n^\frac{1}{2}>7.5n^{-\frac{1}{2}}+3^\frac{3}{2}+7n^{-\frac{3}{2}} $$
and hence
$$\frac{3}{2}\left(n^\frac{1}{2}-5n^{-\frac{1}{2}}\right)-3^\frac{3}{2}>0.2n^{\frac{1}{2}}+7n^{-\frac{3}{2}}. $$
Similarly, when $n$ is odd,
\begin{align*}
& l\left(n,\frac{n-1}{2}-1,\frac{n}{2}\right) \\
= & \sqrt{2}\left(\frac{n-1}{2}-1\right)\left(\sqrt{\frac{n-2}{\frac{n-1}{2}(n-\frac{n-1}{2})}}-\sqrt{\frac{\frac{3n-9}{2}}{\frac{(n-3)}{2}(n-1)}}\right) \\
= & \frac{\sqrt{2}}{2}(n-3)\frac{\frac{4(n-2)}{n^{2}-1}-\frac{3}{n-1}}{\sqrt{\frac{4(n-2)}{n^{2}-1}}+\sqrt{\frac{3}{n-1}}}\\
= & \frac{\sqrt{2}}{2}\frac{n^{2}-14n+33}{\sqrt{4(n^3-2n^2-n+2)}+\sqrt{3(n^3+n^2-n-1)}} .
\end{align*}
Then
\begin{align*}
& l\left(n,\frac{n-1}{2}-1,\frac{n}{2}\right) \\
< & \frac{\sqrt{2}}{2}\frac{n^2+33}{\sqrt{4(n^3-3n^2)}+\sqrt{3n^3}}\\
= & \frac{\sqrt{2}}{2}\frac{n^{\frac{1}{2}}+33n^{-\frac{3}{2}}}{\sqrt{4(1-3n^{-1})}+\sqrt{3}} \\
\leq & \frac{\sqrt{2}}{2}\frac{1}{\sqrt{4(1-\frac{3}{49})}+\sqrt{3}}\left(n^{\frac{1}{2}}+33n^{-\frac{3}{2}}\right) \\
= & \frac{\sqrt{2}}{2}\frac{7}{2\sqrt{46}+7\sqrt{3}}\left(n^{\frac{1}{2}}+33n^{-\frac{3}{2}}\right)\\
< & 0.2\left(n^{\frac{1}{2}}+33n^{-\frac{3}{2}}\right).
\end{align*}
For $n\geq 49$ we have
$$ 1.3n^\frac{1}{2}>7.5n^{-\frac{1}{2}}+3^\frac{3}{2}+6.6n^{-\frac{3}{2}} $$
and hence
$$ \frac{3}{2}\left(n^\frac{1}{2}-5n^{-\frac{1}{2}}\right)-3^\frac{3}{2}>0.2\left(n^{\frac{1}{2}}+33n^{-\frac{3}{2}}\right). $$
\end{itemize}
\end{proof}

\section{Some computational analysis}
With Theorems~\ref{theorem4-1}, \ref{theorem5-1} and \ref{theorem6-1}, we may examine the influence on the maximum ABC index by the independence number $\beta$, pendent vertex number $p$, and edge-connectivity number $k$. In Figure~\ref{fig:trendingoftheo2.2} we take $n=200, 250, 300, 350$ respectively and $\beta \in [1,199]$, it is easy to see that the maximum ABC index is decreasing faster as $\beta$ grows.

\begin{figure*}[!htbp]
\centering
\includegraphics[width=0.6\textwidth]{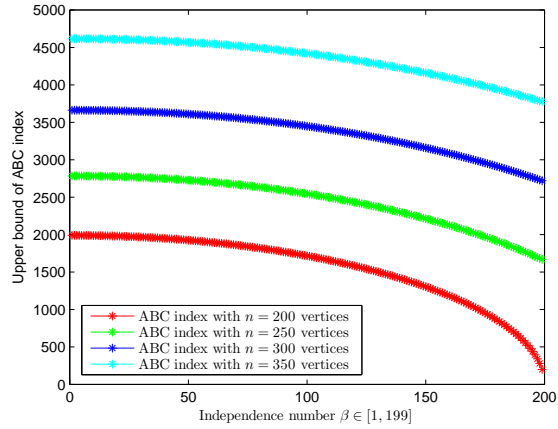}
\caption{The maximum ABC index with $n=200, 250, 300, 350$ and $\beta \in [1,199]$.}
\label{fig:trendingoftheo2.2}
\end{figure*}

Similarly, Figures~\ref{fig:trendingoftheo2.4} and \ref{fig:trendingoftheo3.1} show that the maximum ABC index decreases, but slower as the number of pendant vertices or edge-connectivity grows.

\begin{figure*}[!htbp]
\centering
\includegraphics[width=0.6\textwidth]{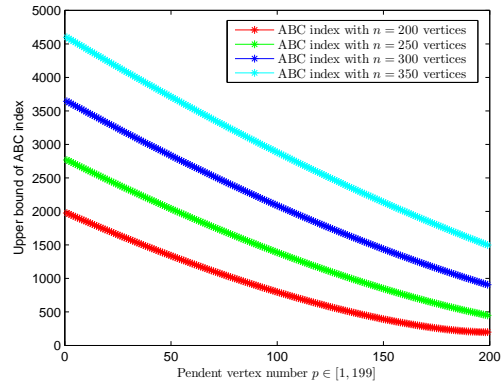}
\caption{The maximum ABC index with $n=200, 250, 300, 350$ and $p \in [1,199]$.}
\label{fig:trendingoftheo2.4}
\end{figure*}

\begin{figure*}[!htbp]
\centering
\includegraphics[width=0.6\textwidth]{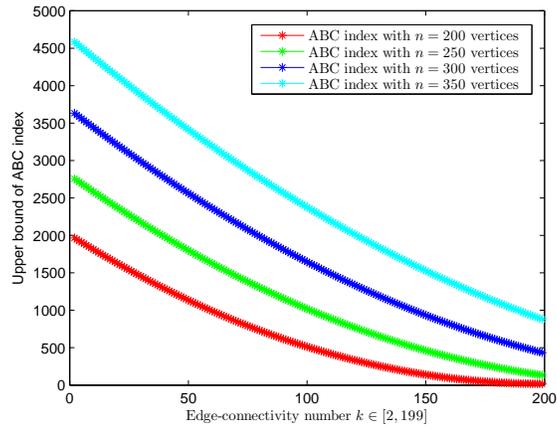}
\caption{The maximum ABC index with $n=200, 250, 300, 350$ and $k \in [2,199]$.}
\label{fig:trendingoftheo3.1}
\end{figure*}

In Figure~\ref{fig:trendingofinter} the curves corresponding to $n=200$, $\beta, p \in [1,199]$, and $k \in [2,199]$ are plotted. It is interesting to note that with given value $x$, the maximum ABC index is the largest when $\beta = x$ and smallest when $k=x$.

\begin{figure*}[!htbp]
\centering
\includegraphics[width=0.6\textwidth]{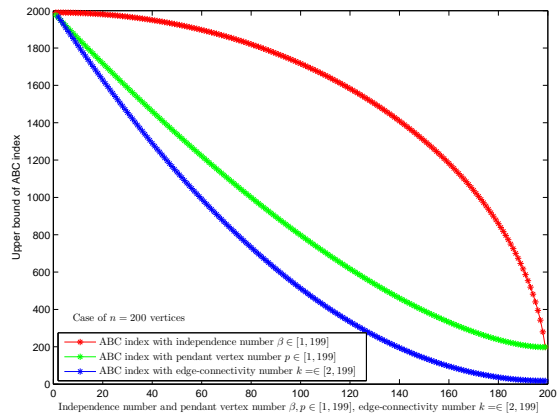}
\caption{The maximum ABC index with $n=200$, $\beta, p \in [1,199]$, and $k \in [2,199]$.}
\label{fig:trendingofinter}
\end{figure*}

\section{Concluding Remarks}

We have discussed the maximum ABC index among graphs of given order and various fixed parameters. As can be seen from the arguments, the ideas are simple but the proofs can be very technical and tedious. As another example of such studies, one may consider the maximum ABC index of graphs with given chromatic number.

\begin{definition}
Denote by $T_{n,t}$ the complete $t-$partite graph of order $n$ with $|n_i-n_j|\leq 1$, where $n_i$, $i=1,2,\cdots, t$, is the number of vertices in the $i$th partition set of  $T_{n,t}$.
\end{definition}

\begin{proposition}\label{theorem7-1}
For any connected graph $G$ of order $n$ with chromatic number $\chi=2$:
\begin{itemize}
\item If $n$ is even, then $ABC(G)\leq \frac{n}{2}\sqrt{n-2}$ with equality if and only if $G\cong T_{n,2}$;
\item If $n$ is odd, then $ABC(G)\leq \frac{1}{2}\sqrt{(n-2)(n^2-1)}$ with equality if and only if $G\cong T_{n,2}$.
\end{itemize}
\end{proposition}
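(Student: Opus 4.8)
The plan is to reduce the problem, via Theorem~\ref{theorem2-2}, to a comparison among complete bipartite graphs, and then to optimize over the sizes of the two parts.

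First I would observe that a connected graph $G$ of order $n\ge 2$ has $\chi(G)=2$ precisely when $G$ is bipartite and has at least one edge. Fix a proper $2$-colouring with colour classes $A$ and $B$, say $|A|=a$ and $|B|=n-a$ with $1\le a\le n-1$. Every edge of $G$ joins $A$ to $B$, so $G$ is a spanning subgraph of the complete bipartite graph $K_{a,n-a}$, which itself has chromatic number $2$ and is connected. Let $G^\ast$ be an extremal graph for the stated problem. Since $G^\ast$ is connected on $n\ge 2$ vertices it has no isolated vertex, so Theorem~\ref{theorem2-2} yields a \emph{strict} increase of the ABC index whenever a non-edge is added. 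Hence if $G^\ast$ were a proper subgraph of the corresponding $K_{a,n-a}$, we could add an edge between its two colour classes, keeping $\chi=2$ while strictly increasing the ABC index, a contradiction. Therefore $G^\ast\cong K_{a,n-a}$ for some $a$ with $1\le a\le n-1$.

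Next I would compute $ABC(K_{a,n-a})$ directly. In $K_{a,n-a}$ the $a$ vertices of one side have degree $n-a$ and the $n-a$ vertices of the other side have degree $a$, and each of the $a(n-a)$ edges joins a vertex of degree $n-a$ to a vertex of degree $a$. Hence every edge contributes $f(a,n-a)=\sqrt{\frac{n-2}{a(n-a)}}$, so
\[
ABC(K_{a,n-a}) = a(n-a)\sqrt{\frac{n-2}{a(n-a)}} = \sqrt{(n-2)\,a(n-a)} .
\]

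Finally, it remains to maximize $a(n-a)$ over integers $1\le a\le n-1$. The map $a\mapsto a(n-a)$ is strictly concave, with unique integer maximum attained exactly when $\{a,n-a\}=\{\lfloor n/2\rfloor,\lceil n/2\rceil\}$; this gives $a(n-a)=n^2/4$ when $n$ is even and $a(n-a)=(n^2-1)/4$ when $n$ is odd. Substituting into the displayed formula yields the two claimed bounds, and since the maximizer is unique, the extremal graph is precisely $K_{\lfloor n/2\rfloor,\lceil n/2\rceil}=T_{n,2}$. I do not expect a genuine obstacle here: the only steps needing a line of care are checking that $\chi=2$ is preserved under adding cross-edges (so that Theorem~\ref{theorem2-2} can be applied within the admissible family) and that the maximizer of $a(n-a)$ is unique (which gives the equality characterization). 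The same scheme would apply for general $\chi=t$, with the difference that maximizing the corresponding edge-weighted sum over the part sizes $n_1,\dots,n_t$ becomes the technically involved part.
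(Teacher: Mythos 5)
Your proposal is correct and follows essentially the same route as the paper: reduce to a complete bipartite graph via Theorem~\ref{theorem2-2} (noting connectivity rules out the equality case), then balance the two parts. The only cosmetic difference is that you maximize the closed form $ABC(K_{a,n-a})=\sqrt{(n-2)\,a(n-a)}$ directly over $a$, whereas the paper uses a one-step exchange argument comparing $\overline{K_{n_1}}\vee\overline{K_{n_2}}$ with $\overline{K_{n_1+1}}\vee\overline{K_{n_2-1}}$; both come down to the same comparison of $a(n-a)$.
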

\begin{proof}
Let $G^\ast$ be the graph with the maximum $ABC$ index among all $n$-vertex connected graphs with chromatic number $\chi=2$.
By Theorem \ref{theorem2-2}, we must have $G^\ast \cong \overline{K_{n_1}}\vee \overline{K_{n_2}}$, where $n_i$ is the number of vertices in the $i$th partition set.

Suppose (for contradiction) that $G^\ast \ncong T_{n,2}$ and $n_2\geq n_1+2$, consider $G'=\overline{K_{n_1+1}}\vee \overline{K_{n_2-1}}$ and we have
\begin{align*}
& ABC(G')-ABC(G^\ast) \\
= & (n_1+1)(n_2-1)\sqrt{\frac{2n-n_1-n_2-2}{(n_1+1)(n_2-1)}}-n_1n_2\sqrt{\frac{2n-n_1-n_2-2}{n_1n_2}} \\
= & \left(\sqrt{(n_1+1)(n_2-1)}-\sqrt{n_1n_2}\right)\sqrt{n-2}.
\end{align*}
Since $(n_1+1)(n_2-1)-n_1n_2=n_2-n_1-1>0$, we have $ABC(G')-ABC(G^\ast)>0$, a contradiction.
\end{proof}

Both computational results and combinatorial intuitions suggest the following, which we post here as a conjecture.

\begin{conjecture}\label{conjecture7-2}
Let $G$ be an $n-$vertex connected graph with chromatic number $\chi\geq 3$. Then
$$ABC(G)\leq ABC(T_{n,\chi})$$ with equality if and only if $ABC(G)\cong ABC(T_{n,\chi})$.
\end{conjecture}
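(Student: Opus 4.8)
The plan is to follow the two-stage pattern of Theorems~\ref{theorem4-1}--\ref{theorem6-1} and Proposition~\ref{theorem7-1}. \emph{Stage one: reduce to a complete $\chi$-partite graph.} Let $G^\ast$ be a connected $n$-vertex graph with $\chi(G^\ast)=\chi$ of maximum $ABC$ index, and fix a proper $\chi$-colouring with (nonempty) colour classes $V_1,\dots,V_\chi$. If some $u\in V_i$ and $v\in V_j$ with $i\ne j$ were non-adjacent, the same colouring would still be proper for $G^\ast+uv$, so $\chi(G^\ast+uv)=\chi(G^\ast)=\chi$ (adding an edge cannot lower the chromatic number); since $G^\ast$ is connected it has no isolated vertex, so Theorem~\ref{theorem2-2} would force $ABC(G^\ast+uv)>ABC(G^\ast)$, a contradiction. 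Hence every cross-class pair is an edge and $G^\ast\cong K_{n_1,\dots,n_\chi}$ with $n_i=|V_i|\ge 1$. (Consistently, adding any of the still-missing edges — all of them \emph{inside} a class — pushes the chromatic number up to $\chi+1$, because the remaining $\chi-1$ classes already force $\chi-1$ colours and the two endpoints of the new edge need two more; so no move in stage one leaves the prescribed class of graphs.)

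\emph{Stage two: balance the class sizes.} For $G^\ast=K_{n_1,\dots,n_\chi}$ every vertex of class $i$ has degree $n-n_i$, so
\[
ABC(K_{n_1,\dots,n_\chi})=\sum_{1\le i<j\le\chi} n_in_j\, f(n-n_i,n-n_j)=\sum_{i<j} n_in_j\sqrt{\frac{2n-n_i-n_j-2}{(n-n_i)(n-n_j)}},
\]
and the claim is equivalent to: over all $(n_1,\dots,n_\chi)$ with $n_i\ge 1$ and $\sum n_i=n$, this sum is maximized exactly when $\lvert n_i-n_j\rvert\le 1$ for all $i,j$, i.e. at $T_{n,\chi}$. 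I would prove this through a balancing lemma: if $n_i\ge n_j+2$, then replacing $(n_i,n_j)$ by $(n_i-1,n_j+1)$ strictly increases the index; iterating (each step being a strict increase) forces the maximizer to be $T_{n,\chi}$ and yields uniqueness at once. Writing $a=n_i\ge b+2=n_j+2$ and $\alpha_\ell(t)=t\,f(n-t,\,n-n_\ell)$, the induced change is
\[
\Delta=\underbrace{\bigl[(a-1)(b+1)f(n-a+1,n-b-1)-ab\,f(n-a,n-b)\bigr]}_{(i,j)\text{-term}}+\sum_{\ell\ne i,j} n_\ell\bigl[(\alpha_\ell(b+1)-\alpha_\ell(b))-(\alpha_\ell(a)-\alpha_\ell(a-1))\bigr].
\]

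The sign analysis of these two pieces is the heart of the matter and the main obstacle. For the $\ell$-sum: since $\ell\ne i,j$ and $a\ge 3$ one always has $n-n_\ell\ge 3$, and a (somewhat lengthy) second-derivative computation shows that $\alpha_\ell$ is then strictly \emph{convex} and increasing; hence each bracketed increment difference is $\le 0$, so the $\ell$-sum works against us. Everything therefore rests on showing the $(i,j)$-term dominates. That term has the clean closed form
\[
(a-b-1)\sqrt{2n-a-b-2}\;\frac{1}{\sqrt{P}}\left(1-\frac{(a-1)(b+1)}{\sqrt{P'}(\sqrt{P}+\sqrt{P'})}\right),\qquad P=(n-a)(n-b),\ \ P'=P+(a-b-1),
\]
which is positive because $\sqrt{PP'}>0>ab-(n-a)(n-b)=n(a+b-n)$ (here $a+b\le n-1$ since $\chi\ge 3$). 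The quantitative step is to verify that this lower bound beats $\sum_{\ell\ne i,j} n_\ell\,(\alpha_\ell(a)-\alpha_\ell(a-1)-(\alpha_\ell(b+1)-\alpha_\ell(b)))$, whose summands I would control by the convexity defect $\int_{\,b}^{\,a}\alpha_\ell''$ — small except when class $i$ is large, i.e. $n-a$ small, which is precisely the regime where both sides are of order $n$. I expect this to reduce, in the spirit of the proof of Claim~\ref{claim}, to a finite family of extreme configurations (class $i$ as large as possible, the remaining classes as small as possible) together with a computer-assisted check for small $n$ and $\chi$; a natural intermediate milestone is the case $\chi=3$ and $n$ large, where a direct asymptotic comparison of the two order-$n$ contributions already settles it. Should a uniform estimate resist, the argument should at least establish the result for every fixed $\chi$ and all sufficiently large $n$, leaving only a bounded range for numerical confirmation.
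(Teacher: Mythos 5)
First, note that the statement you are proving is posed in the paper only as Conjecture~\ref{conjecture7-2}; the authors give no proof (they offer only computational evidence), so there is no argument of theirs to compare yours against. Judged on its own, your Stage one is correct and complete: it is exactly the edge-addition argument the paper uses for $\chi=2$ in Proposition~\ref{theorem7-1} (adding a cross-class edge preserves properness of the colouring, hence preserves $\chi$, and Theorem~\ref{theorem2-2} gives a strict increase since a connected graph has no isolated vertices), and it correctly reduces the problem to complete $\chi$-partite graphs. Your formula for $ABC(K_{n_1,\dots,n_\chi})$, the decomposition of $\Delta$ into the $(i,j)$-term and the $\ell$-sum, the closed form for the $(i,j)$-term, and its positivity (via $(a-1)(b+1)<(n-a+1)(n-b-1)$, which follows from $a+b\le n-1$ when $\chi\ge 3$) all check out.

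The genuine gap is the one you yourself flag: the balancing lemma is not proved. You establish that the $(i,j)$-term is positive and that (granting the asserted but unverified convexity of $\alpha_\ell$) the $\ell$-sum is nonpositive, but the entire conclusion rests on the positive term dominating the negative one, and for that you offer only a programme --- an integral bound on the convexity defect, a reduction to ``extreme configurations,'' a computer-assisted check for small cases, and a fallback to ``fixed $\chi$ and $n$ large.'' None of these steps is carried out, and the regime you identify as critical (one class of size close to $n$, where both competing quantities are of order $n$) is precisely where a hand-waving estimate is most likely to fail; your own $n=100$, $(98,1,1)\to(97,2,1)$ type computations are encouraging but are data, not proof. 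Until that inequality is established uniformly in $(n,\chi,n_1,\dots,n_\chi)$ --- together with an actual proof of the convexity of $t\mapsto t\,f(n-t,n-n_\ell)$, which is asserted but not shown --- this remains a plausible strategy for the conjecture rather than a proof of it. (A minor side remark: the equality condition in the conjecture as printed should read $G\cong T_{n,\chi}$, and your balancing argument, if completed, would indeed deliver that uniqueness.)
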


Another question is to consider the case when the edge connectivity is 1. We conjecture that the maximum ABC index behaves similarly as in the general case, achieved by attaching a pendant edge to a vertex of $K_{n-1}$. Note that to prove this, it suffices to show that the following function (with $f(x,y)=\sqrt{\frac{x+y-2}{xy}}$) is decreasing:
\begin{align*}
& (x-1)f(x,x-1) + \frac12(x-1)(x-2)f(x-1,x-1) + f(x,n-x)  \\
+ &  (n-x-1)f(n-x,n-x-1) \\
 + & \frac12 (n-x-1)(n-x-2) f(n-x-1,n-x-1) .
\end{align*}
\begin{center}
{\bf Acknowledgements}
\end{center}
  The authors would like to thank the anonymous referees for many helpful and  comments on an earlier version of this paper.

\end{document}